\def\UrlSpecials{\do\~{\kern -.15em\lower .7ex\hbox{~}\kern .04em}} \catcode`~=13 
\newcommand{\tnorm}[1]{{\left\vert\kern-0.25ex\left\vert\kern-0.25ex\left\vert #1 
    \right\vert\kern-0.25ex\right\vert\kern-0.25ex\right\vert}}
\newcommand{\tnormt}[1]{{\vert\kern-0.25ex\vert\kern-0.25ex\vert #1 
    \vert\kern-0.25ex\vert\kern-0.25ex\vert}}
\newcommand{\normt}[1]{\Vert#1\Vert}
\newcommand{\abst}[1]{\vert#1\vert}
\newcommand{\nn}{\nonumber}
\newcommand{\nt}{\addtocounter{equation}{1}\tag{\theequation}} 
\newcommand{\dom}{\mathsf{dom}\,}
\newcommand{\ri}{\mathsf{ri}\,}
\newcommand{\calA}{\mathcal{A}}
\newcommand{\calB}{\mathcal{B}}
\newcommand{\calF}{\mathcal{F}}
\newcommand{\calN}{\mathcal{N}}
\newcommand{\calP}{\mathcal{P}}
\newcommand{\calV}{\mathcal{V}}
\newcommand{\calX}{\mathcal{X}}
\newcommand{\bbE}{\mathbb{E}}
\newcommand{\bbR}{\mathbb{R}}
\newcommand{\bbU}{\mathbb{U}}
\newcommand{\bbX}{\mathbb{X}}
\DeclareMathAlphabet{\mathbsf}{OT1}{cmss}{bx}{n}
\newcommand{\rvA}{\mathsf{A}}
\newcommand{\rvP}{\mathsf{P}}
\newcommand{\rvx}{\mathsf{x}}
\newcommand{\rvy}{\mathsf{y}}
\newcommand{\tilG}{\widetilde{G}}
\newcommand{\barD}{\bar{D}}
\newcommand{\barG}{\bar{G}}
\newcommand{\lrangle}[2]{\left\langle{#1},{#2}\right\rangle}
\newcommand{\lranglet}[2]{\langle{#1},{#2}\rangle}
\DeclareMathOperator*{\argmax}{arg\,max}
\DeclareMathOperator*{\argmin}{arg\,min}
\newtheorem{theorem}{Theorem} 
\newtheorem*{theorem*}{Theorem}
\newtheorem{lemma}{Lemma}
\newtheorem{corollary}{Corollary}
\newtheorem*{assump*}{Assumption}
\theoremstyle{remark}
\newtheorem{remark}{Remark}
\newcommand{\qednew}{\nobreak \ifvmode \relax \else
      \ifdim\lastskip<1.5em \hskip-\lastskip
      \hskip1.5em plus0em minus0.5em \fi \nobreak
      \vrule height0.75em width0.5em depth0.25em\fi}
\numberwithin{equation}{section}
\numberwithin{assump}{section}
\numberwithin{lemma}{section}
\numberwithin{theorem}{section}
\numberwithin{prop}{section}
\numberwithin{corollary}{section}
\numberwithin{remark}{section}
\newcommand{\rx}{r_\rvx}
\newcommand{\ry}{r_\rvy}
\newcommand{\hx}{h_\rvx}
\newcommand{\hy}{h_\rvy}
\newcommand{\Lx}{L_\rvx}
\newcommand{\Ly}{L_\rvy}
\newcommand{\zetax}{\zeta_\rvx}
\newcommand{\zetay}{\zeta_\rvy}
\begin{document}

\title{A Generalized Frank-Wolfe Method With ``Dual Averaging'' for Strongly Convex Composite Optimization}

\author{Renbo Zhao\thanks{MIT Operations Research Center, 77 Massachusetts Avenue, Cambridge, MA   02139 (\href{mailto: renboz@mit.edu}{renboz@mit.edu}).} \quad Qiuyun Zhu\thanks{Department of Mathematics and Statistics, Boston University, 111 Cummington Mall, Boston, MA 02215 (\href{mailto:  rachyun@bu.edu}{rachyun@bu.edu}).}}


\maketitle

\begin{abstract}
We propose a simple variant of the generalized Frank-Wolfe method for solving strongly convex composite optimization problems, by introducing an additional averaging step on the dual variables. We show that in this variant,  one can choose a simple constant step-size and obtain a linear convergence rate on the duality gaps. By leveraging the convergence analysis of this variant, we then analyze the local convergence rate of the logistic fictitious play algorithm, which is well-established in game theory but lacks any form of convergence rate guarantees. We show that, with high probability, this algorithm converges locally at rate $O(1/t)$, in terms of certain expected duality gap.  
\end{abstract}


\section{Introduction}\label{sec:intro}

Given two finite-dimensional real normed spaces $(\bbX,\normt{\cdot}_{\bbX})$ and $(\bbU,\normt{\cdot}_{\bbU})$, with dual spaces denoted by $(\bbX^*,\normt{\cdot}_{\bbX^*})$ and $(\bbU^*,\normt{\cdot}_{\bbU^*})$, respectively,  let us consider the following convex optimization problem:
\begin{equation}
p^*:= {\min}_{x\in\bbX}\;\;\; [p(x):=f(\rvA x) + h(x)], \tag{P} \label{eq:P}
\end{equation}
where $\rvA:\bbX\to\bbU$ is  a (bounded) linear operator,  $f:\bbU\to\bbR$ is a convex differentiable function whose gradient is $L$-Lipschitz on $\bbU$ (for some $L>0$), namely, 
\begin{equation}
\normt{\nabla f(u) - \nabla f(v)}_{\bbU^*}\le L\normt{u-v}_{\bbU}, \quad \forall\,u,v\in\bbU, 
\end{equation}
and $h:\bbX\to\bbR\cup\{+\infty\}$ is a closed convex function with nonempty domain, which is denoted by $\dom h:=\{x\in\bbX:h(x)<+\infty\}$. 
We assume that $h$ is ``simple'' such that  the ``generalized'' linear optimization sub-problem, namely 
\begin{equation}
{\min}_{x\in\bbX}\;\;\; \lrangle{c}{x} + h(x), \tag{GLO} \label{eq:LMO}
\end{equation}
can be easily solved for  any $c\in\bbX^*$. As an example, if $h$ is an indicator function of a polytope $\calP$, then~\eqref{eq:LMO} becomes a linear program, which has an optimal solution that is a vertex of $\calP$. 

\begin{algorithm}[t!]
\caption{GFW Method 
}\label{algo:GFW}
\begin{algorithmic}
\State {\bf Input}: Starting point $x^0\in\dom h$, and step-sizes $\{\alpha_t\}_{t\ge 0}\subseteq (0,1]$ 
\State {\bf At iteration $t\in\{0,1,\ldots\}$}:
\begin{enumerate}
{\setlength\itemindent{10pt} \item \label{item:LMO}  Compute $\nabla f(\rvA x^t)$ and $v^t\in\calV(x^t):=\argmin_{x\in\bbX}\; \lranglet{\rvA^* \nabla f(\rvA x^t)}{x} + h(x)$} 
{\setlength\itemindent{10pt} \item Update $x^{t+1}:= x^t + \alpha_t (v^t-x^t)$ 
}
\end{enumerate}
\end{algorithmic}
\end{algorithm}

First-order methods that involve computing the gradients of $f$ and solving sub-problems in
the form of~\eqref{eq:LMO} are referred to as the {\em generalized} Frank-Wolfe (GFW) method, which is shown Algorithm~\ref{algo:GFW}.  This method have been studied in several previous works,  
e.g., Bach~\cite{Bach_15}, Nesterov~\cite{Nest_18}, Ghadimi~\cite{Ghad_19} and Pena~\cite{Pena_21} (all of which we will review shortly in Section~\ref{sec:prior_art}).  
Indeed, the name of the GFW method comes from the fact that it can be regarded as a generalization of the Frank-Wolfe (FW) method~\cite{Frank_56}, which dates back to 1950s (see~\cite{Freund_16} and references therein). Specifically, 
if $h$ is the indicator function of some ``simple'' convex compact set (under which~\eqref{eq:LMO} can be easily solved), then the GFW method specializes precisely to the FW method. 

\subsection{Review of Computational Guarantees of the GFW Method} \label{sec:prior_art}

When $h$ is non-strongly-convex on $\dom h$, the computational guarantees of the GFW method (i.e., Algorithm~\ref{algo:GFW}) are exactly the same as those of the classical FW methods (see e.g.,~\cite{Freund_16}). 
Specifically, assume $\dom h$ to be bounded, and define the diameter of $\rvA (\dom h):= \{\rvA x: x\in\dom h\}$ as 
$$\barD:= {\sup}_{x,y\in\dom h} \;\normt{\rvA(x-y)}_\bbU<+\infty.$$ Additionally, for all $t\ge 0$, let us define the primal optimality gap at $x^t$ by $$\delta(x^t):= p(x^t) - p^*$$ and the FW-gap at $x^t$ by 
\begin{equation}
G(x^t):= \lranglet{\rvA^*\nabla f(\rvA x^t)}{x^t - v^t} + h(x^t) - h(v^t). \label{eq:G} 
\end{equation}
In fact,  it is easy to see that: 
\begin{enumerate}[label=\roman*)]
\item  $G(x^t)\ge \delta(x^t)\ge 0$ for all $t\ge 0$ (cf.~\cite[Eqn.~(2.4)]{Zhao_20b}), 
\item  $G(x^t)$ has the same value for any $v^t\in\calV(x^t)$ (cf.\ Step~\ref{item:LMO} in Algorithm~\ref{algo:GFW}), and hence the particular choice of $v^t\in\calV(x^t)$ in~\eqref{eq:G} does not matter. 
\end{enumerate}
By choosing the adaptive step-sizes $\alpha_t=\min\{1,G(x^t)/(L\barD^2)\}$ or the predetermined step-sizes $\alpha_t = 2/(t+2)$ for $t\ge 0$, one can show that (see e.g.,~\cite[Section 4]{Bach_15})
\begin{align}
\delta(x^t) &\le \frac{2L\barD^2}{t+1} \quad\mbox{and}\quad  \barG_t:=\min_{0 \le i \le t} G(x^i)
 \le \frac{8L\barD^2}{t+1},\quad \forall\,t\ge 0.\label{eq:FW_h_nsc}
\end{align}

Next, let us consider the case where $h$ is $\mu$-strongly-convex on $\dom h$ (for some $\mu>0$), namely
\begin{equation}
h(\lambda x + (1-\lambda) y) \le \lambda h(x) + (1-\lambda) h(y) - (\mu/2)\lambda(1-\lambda)\normt{x-y}_\bbX^2, \quad \forall\,x,y\in\dom h, \;\forall\lambda\in[0,1].\nn
\end{equation}
In contrast to the well-studied non-strongly-convex case, the computational guarantees of the GFW method (i.e., Algorithm~\ref{algo:GFW}) in the strongly-convex case appear to be less studied. 
Nesterov~\cite[Section 5]{Nest_18} showed that, if $\dom h$ is bounded with diameter  $$D:= {\sup}_{x,y\in\dom h} \;\normt{x-y}_\bbX<+\infty,$$ and one chooses the predetermined step-sizes $\alpha_t = \frac{6(t+1)}{(t+2)(2t+3)}$ for all $t\ge 0$, then 
\begin{equation}
\delta(x^t)\le \frac{27\kappa^2\mu D^2}{(t+1)(2t+1)}, \quad \forall\,t\ge 0, 
\qquad \mbox{where}\quad \kappa:= \frac{L\normt{\rvA}^2}{\mu}  \label{eq:FW_Nest}
\end{equation}
denotes the condition number of~\eqref{eq:P}, and $\normt{\rvA}: = {\sup}_{\normt{x}_\bbX=1}\;\normt{\rvA x}_\bbU$
denotes the operator norm of $\rvA$. 
Later on, Ghadimi~\cite[Corollary~1(b)]{Ghad_19} showed that without assuming the boundedness of $\dom h$, as long as one chooses the constant step-sizes $\alpha_t = 1/(1+4\kappa)$ for $t\ge 0$, then the sequence of minimum FW-gaps $\{\barG_t\}_{t\ge 0}$ (cf.~\eqref{eq:FW_h_nsc}) converges to zero linearly: 
\begin{equation}
\barG_t\le 4\delta(x^0) (1+4\kappa)\big(1-1/(2(1+4\kappa))\big)^t, \quad \forall\, t\ge 1. \label{eq:FW_Ghad}
\end{equation}
In addition, Ghadimi~\cite[Corollary~2]{Ghad_19} showed that the convergence rate in~\eqref{eq:FW_Ghad} continues to hold (up to absolute constants) if one instead chooses the step-sizes $\{\alpha_t\}_{t\ge 0}$ via certain backtracking line-search procedure. More recently, 
Pena~\cite{Pena_21} 
showed that by choosing the step-sizes $\{\alpha_t\}_{t\ge 0}$ via exact line-search, we have the following simpler linear convergence result: 
\begin{equation}
\tilG_t \le \tilG_0\big(1-(1/2)\min\{1,(2\kappa)^{-1}\}\big)^t, \quad \forall\,t\ge 0,  \label{eq:FW_Pena}
\end{equation}  
where 
\begin{equation}
\tilG_t := f(\rvA x^t) + h(x^t) + {\min}_{0\le i \le t}\; f^*(\nabla f(x^i)) + h^*(-\rvA^*\nabla f(x^i)), \quad \forall\,t\ge 0. \label{eq:tilG} 
\end{equation}
In~\eqref{eq:tilG}, $f^*$ and $h^*$ denote Fenchel conjugates of $f$ and $h$, respectively, and $\rvA^*:\bbU^*\to\bbX^*$ denotes the adjoint of $\rvA$  (see Section~\ref{sec:prelim} for details). 
In addition,  Pena~\cite[Theorem~2]{Pena_21} showed that~\eqref{eq:FW_Pena} still holds (up to absolute constants) if  the step-sizes $\{\alpha_t\}_{t\ge 0}$ are chosen by  backtracking line-search. 

\subsection{Main Contributions}

\begin{algorithm}[t!]
\caption{GFW Method with Dual Averaging for Strongly Convex $h$ 
}\label{algo:GFWDA}
\begin{algorithmic}
\State {\bf Input}: Starting point $x^0\in\dom h$, $y^0\in \dom f^*$, and step-sizes $\{\alpha_t\}_{t\ge 0}\subseteq (0,1]$. 
\State {\bf At iteration $t\in\{0,1,\ldots\}$}:
\begin{enumerate}
{\setlength\itemindent{10pt} \item \label{item:LMO2}  Compute 
$v^t=\argmin_{x\in\bbX}\; \lranglet{\rvA^* y^t}{x} + h(x)$} 
{\setlength\itemindent{10pt} \item Update $x^{t+1}:= x^t + \alpha_t (v^t-x^t)$\label{item:x_upd}}
{\setlength\itemindent{10pt} \item Compute $g^t:=\nabla f(\rvA x^t)$ and update $y^{t+1}:= (1-\alpha_t)y^t + \alpha_t g^t$\label{item:DA}} 
\end{enumerate}
\end{algorithmic}
\end{algorithm}

In this work, we focus on the case where $h$ is $\mu$-strongly convex (for some $\mu>0$), and propose a simple 
variant of the GFW method (cf.~Algorithm~\ref{algo:GFW}) in Algorithm~\ref{algo:GFWDA}.  
Compared with Algorithm~\ref{algo:GFW}, we see that it simply adds an additional averaging step to the 
dual iterates $\{y^t\}_{t\ge 0}$ (cf.\ Step~\ref{item:DA}), and the weights of averaging are exactly given by the primal step-sizes $\{\alpha_t\}_{t \ge 0}$. 
As a result, the primal iterates $\{x^t\}_{t\ge 0}$ and the dual iterates $\{y^t\}_{t\ge 0}$ are updated in a ``symmetric'' fashion. Despite the simplicity of this additional ``dual averaging'' step, as we will show in Section~\ref{sec:analysis}, it allows us to establish {\em a simple and elegant contraction} on the sequence of duality gaps evaluated on the primal-dual pairs $\{(x^t,y^t)\}_{t\ge 0}$, by simply choosing the step-sizes $\{\alpha_t\}_{t\ge 0}$ to be a constant (that only depends on the condition number $\kappa$). Compared with the previous results established for the GFW method in Algorithm~\ref{algo:GFW}~\cite{Nest_18,Ghad_19,Pena_21}, 
our result shows that  Algorithm~\ref{algo:GFWDA} has the benefit of 
a simple choice of step-sizes, which does not involve any form of line-search --- this feature is particularly attractive when the condition number $\kappa$ (cf.~\eqref{eq:FW_Nest}) is explicitly known or can be easily estimated.   

As the second main contribution of this work, we analyze the local convergence rate of the logistic fictitious play (LFP) algorithm~\cite{Fuden_93}, which is a classical stochastic game-theoretic algorithm that has only been shown to converge asymptotically. 
The key to our analysis is to observe that the deterministic version of LFP (D-LFP) is a special instance of Algorithm~\ref{algo:GFWDA}, and LFP can be regarded as a certain ``stochastic approximation'' of D-LFP. As a result, by properly incorporating the ``stochastic noise'' in LFP to the analysis of D-LFP (which is precisely that of Algorithm~\ref{algo:GFWDA}), we then obtain the local convergence rate of LFP. Our analysis shows that, with high probability, LFP converges locally at rate $O(1/t)$ in terms of certain expected duality gap. 
Somewhat surprisingly, our numerical results 
on the empirical behavior of LFP are in excellent consistence with our theory.\\  


\noindent {\bf Notations.} For any non-empty set $\calX$, we denote its relative interior as $\ri \calX$. In addition, let $\iota_\calX$ denote its indicator function (namely, $\iota_\calX(x) = 0$ if $x\in\calX$ and $\iota_\calX(x) = +\infty$ otherwise).  For a matrix $M$ and any $p,q\in[1,+\infty]$, we define its $(p,q)$-operator-norm as $\normt{M}_{p,q}=\sup_{\normt{z}_p=1}\,\normt{Mz}_q$. 

\section{Preliminaries}\label{sec:prelim}

Let us provide some background on duality theory that will be useful in our analysis in Section~\ref{sec:analysis}. In the rest of this work, for notational brevity, we will omit the subscript of norms, and the meaning of $\normt{\cdot}$ and $\normt{\cdot}_*$  can be inferred from the context. 

To begin with, let us first write down the (Fenchel) dual problem associated with~\eqref{eq:P}:
\begin{equation}
-d^*:=-{\min}_{y\in\dom f^*} \;\; [d(y):=f^*(y) + h^*(-\rvA^*y)] = {\max}_{y\in\dom f^*} - d(y), \tag{D} \label{eq:D}
\end{equation}
where recall that $\rvA^*:\bbU^*\to\bbX^*$ denotes the adjoint of $\rvA$, and $f^*:\bbU^*\to\bbR\cup\{+\infty\}$ and $h^*:\bbX^*\to\bbR$ denote the Fenchel conjugates of $f$ and $h$, respectively: 
\begin{align}
f^*(y)&:= {\sup}_{u\in\bbU}\; \lranglet{y}{u} - f(u), \;\;\;\;\;\quad\forall\,y\in\bbU^*,\label{eq:def_f*}\\ 
h^*(z)&: = {\sup}_{x\in\dom h}\; \lranglet{z}{x} - h(x), \;\;\;\;\forall\,z\in\bbX^*.  \label{eq:def_h*}
\end{align}
 From standard results (see e.g.,~\cite{Kakade_09a}), we know that 
\begin{enumerate}[label=\roman*),leftmargin=3ex]
\item The function $f^*$ is $(1/L)$-strongly convex on $\dom f^*$ in the following sense:
\begin{equation}
f^*(y)\ge f^*(w) + \lranglet{g}{y-w} + (2L)^{-1} \normt{y-w}^2, \;\; \forall\,y\in\dom f^*,\;\forall\,w\in\dom \partial f^*, \;\forall\,g\in \partial f^*(w), \label{eq:sc_f*}
\end{equation}
where $\dom \partial f^*:= \{y\in\dom f^*: \partial f^*(y)\ne \emptyset\}$ denotes the set of sub-differentiable points of $f^*$.
\item 
The function $h^*$ is convex and differentiable on $\bbX^*$ and $\nabla h^*$ is $(1/\mu)$-Lipschitz on $\bbX^*$. 
\end{enumerate}
In addition, from~\cite[Theorem 3.51]{Peyp_15}, we see that strong duality holds between~\eqref{eq:P} and~\eqref{eq:D}, namely $p^* = -d^*$. Next, let us define the duality gap $\Delta:\dom h\times \dom f^*\to \bbR$ as 
\begin{equation}
\Delta(x,y):= p(x) + d(y) = f(\rvA x) + h(x) + f^*(y) + h^*(-\rvA^*y), \quad \forall\,x\in \dom h, \;\; \forall\,y\in \dom f^*. \label{eq:pd_gap}
\end{equation}
Using standard results in Fenchel duality (see e.g.,~\cite{Bach_15}), we see that for any $x\in\dom h$, 
\begin{equation}
\Delta(x,\nabla f(\rvA x)) = G(x)= \lranglet{\rvA^*\nabla f(\rvA x)}{x - v(x)} + h(x) - h(v(x)),\label{eq:FWgap_DualityGap}
\end{equation}
where $v(x):= \argmin_{x'\in\bbX}\; \lranglet{\nabla f(\rvA x)}{\rvA x'} + h(x')$ and $G:\dom h\to\bbR$ is defined in~\eqref{eq:G}. In words, for all $x\in\dom h$,~\eqref{eq:FWgap_DualityGap} means that the duality gap at $(x,\nabla f(\rvA x))$ is equal to the FW-gap at $x$.

\section{Convergence Rate of Algorithm~\ref{algo:GFWDA}}\label{sec:analysis}

We derive the (global) convergence rate of Algorithm~\ref{algo:GFWDA} in the following theorem. 


\begin{theorem}
\label{thm:lin_conv}
In Algorithm~\ref{algo:GFWDA}, if we choose $\alpha_t = \min\{1/(2\kappa),1\}$ for all $t\ge 0$,  then 
\begin{equation}
\Delta(x^t,y^t)\le \rho(\kappa)^t \Delta(x^0,y^0), \quad \forall\,t\ge 0, \label{eq:lin_rate}
\end{equation}
where $\kappa$ denotes the condition number of~\eqref{eq:P} $($cf.~\eqref{eq:FW_Nest}$)$ and 
the linear rate function $\rho:[0,+\infty)\to [0,1)$ is defined as 
\begin{equation}
\rho(\kappa):= \begin{cases}
\kappa, & \mbox{if} \;\; 0\le \kappa \le  1/2\\
1-{1}/(4\kappa), & \mbox{if} \;\; \kappa> 1/2
\end{cases}. 
\end{equation}
\end{theorem}

\begin{proof}
First, from the definition of $v^t$ in Step~\ref{item:LMO2}, we see that $$v^t = {\argmax}_{x\in\bbX}\; \lranglet{-\rvA ^* y^t}{x} - h(x), $$ and hence from the definition of $h^*$ in~\eqref{eq:def_h*}, we have 
\begin{equation}
h^*(-\rvA ^* y^t) = \lranglet{-\rvA ^* y^t}{v^t} - h(v^t). 
\label{eq:ineq1}
\end{equation}
Since both $f$ and $h^*$ have Lipschitz gradients on $\bbU$ and $\bbX^*$, respectively,  we have 
\begin{align}
f(\rvA x^{t+1})&\le f(\rvA x^{t}) + \alpha_t\lranglet{\nabla f(\rvA x^{t})}{\rvA(v^t-x^t)} + (L\alpha_t^2/2) \normt{\rvA(v^t-x^t)}^2\nn\\
 &\le f(\rvA x^{t}) + \alpha_t\lranglet{g^t}{\rvA(v^t-x^t)} + (L\alpha_t^2\normt{\rvA}^2/2) \normt{v^t-x^t}^2,\label{eq:f} \\
 h^*(-\rvA^* y^{t+1})&\le h^*(-\rvA^* y^{t}) + \alpha_t\lranglet{\nabla h^*(-\rvA^* y^{t})}{\rvA^*(y^t-g^t)} + (\alpha_t^2/(2\mu)) \normt{\rvA^*(y^t-g^t)}^2\nn\\
 &\le h^*(-\rvA^* y^{t}) + \alpha_t\lranglet{v^t}{\rvA^*(y^t-g^t)} + (\alpha_t^2\normt{\rvA}^2/(2\mu)) \normt{y^t-g^t}^2. \label{eq:h*}
\end{align}
In addition, by the convexities of $h$ and $f^*$ on their respective domains, we have
\begin{align}
h(x^{t+1})&\le (1-\alpha_t) h(x^t) + \alpha_t h(v^t), \label{eq:h}\\
f^*(y^{t+1})&\le (1-\alpha_t) f^*(y^t) + \alpha_t f^*(g^t). \label{eq:f*}
\end{align}
Combining~\eqref{eq:f} to~\eqref{eq:f*}, we have
\begin{align}
\Delta(x^{t+1},y^{t+1}) &= f(\rvA x^{t+1}) + h(x^{t+1}) + h^*(-\rvA^* y^{t+1}) + f^*(y^{t+1})\nn\\
&\le  \Delta(x^{t},y^{t}) -\alpha_t\big\{\big[\lranglet{\rvA x^t}{g^t} - f^*(g^t)\big]+ \big[\lranglet{-\rvA^* y^t}{v^t} - h(v^t)\big] + f^*(y^t)+ h(x^t) \big\}\nn\\
&\hspace{2cm} + \alpha_t^2\kappa\big\{(\mu/2)\normt{v^t-x^t}^2+(2L)^{-1}\normt{y^t-g^t}^2\big\}. \label{eq:bd_Delta_t+1}
\end{align}
Since 
\begin{equation}
g^t = \nabla f(\rvA x^t) = {\argmax}_{y\in\bbU^*}\; \lranglet{\rvA x^t}{y} - f^*(y),\label{eq:def_g^t}
\end{equation} 
we see that $g^t\in \dom \partial f^*$ and 
\begin{align}
f(\rvA x^t) = \lranglet{\rvA x^t}{g^t} - f^*(g^t). \label{eq:ineq3}
\end{align}
By substituting~\eqref{eq:ineq1} and~\eqref{eq:ineq3} into~\eqref{eq:bd_Delta_t+1}, we see that
\begin{align}
\Delta(x^{t+1},y^{t+1}) \le (1-\alpha_t) \Delta(x^{t},y^{t}) 
+ \alpha_t^2\kappa\big\{(\mu/2)\normt{v^t-x^t}^2+(2L)^{-1}\normt{y^t-g^t}^2\big\}. \label{eq:bd_Delta_t+1}
\end{align}
By the $\mu$-strong convexity of $h$ on its domain, the definition of $v^t$ in Step~\ref{item:LMO2} and~\eqref{eq:ineq1}, we have 
\begin{align}
 (\mu/2)\normt{x^t-v^t}^2&\le h(x^t)-h(v^t)+\lranglet{y^t}{\rvA (x^t-v^t)}\nn\\
 & = h(x^t) + \lranglet{y^t}{\rvA x^t} + h^*(-\rvA ^* y^t).\label{eq:ineq2}
\end{align}
In addition, using the $(1/L)$-strong convexity of $f^*$ in the sense of~\eqref{eq:sc_f*},~\eqref{eq:def_g^t} and~\eqref{eq:ineq3}, we have 
\begin{align}
(2L)^{-1}\normt{y^t-g^t}^2 &\le f^*(y^t)-f^*(g^t)+\lranglet{\rvA x^t}{g^t-y^t}\nn\\ 
&= f^*(y^t)-\lranglet{\rvA x^t}{y^t} + f(\rvA x^t). \label{eq:ineq4}
\end{align}
Substituting~\eqref{eq:ineq2} and~\eqref{eq:ineq4} into~\eqref{eq:bd_Delta_t+1}, we have
\begin{equation}
\Delta(x^{t+1},y^{t+1})\le (1-\alpha_t + \alpha_t^2\kappa)\Delta(x^{t},y^{t}). 
\end{equation}
If we choose $\alpha_t = \min\{1/(2\kappa),1\}$, then we see that $1-\alpha_t + \alpha_t^2\kappa = \rho(\kappa)$, for all $t\ge 0$. 
\end{proof}

\begin{remark} \label{rmk:cst_det_step}
Note that in Theorem~\ref{thm:lin_conv}, the linear rate function $\rho$ is continuous, concave and strictly increasing on $[0,+\infty)$. 
Hence the smaller the condition number $\kappa$, the better the linear rate. In the regime that $\kappa>1/2$, we have $\rho(\kappa) = 1-1/(4\kappa)$, and therefore, 
to find a primal-dual pair $(x,y)\in\dom h\times \dom f^*$ such that $\Delta(x,y)\le \varepsilon$,  it requires no more than 
\begin{equation}
\left\lceil 4\kappa\ln\left(\frac{\Delta(x^0,y^0)}{\varepsilon}\right)\right\rceil 
\quad \mbox{iterations}.
\end{equation}
\end{remark}

%
%

\section{Application to LFP}


The LFP algorithm (a.k.a.\ stochastic fictitious play with best logit response), first introduced by Fudenberg and Kreps in 1993~\cite{Fuden_93}, is a classical algorithm in game theory (see~\cite{Ny_06} and references therein). In this work we focus on the two-player zero-sum version, which is shown  in Algorithm~\ref{algo:LFP}. 
Specifically, 
players I and II are given finite action spaces $[n]:=\{1,2,\ldots,n\}$ and $[m]$, respectively, and a payoff matrix $A\in\bbR^{m\times n}$. At the beginning, 
players I and II choose their initial actions $i_0\in[n]$ and $j_0\in[m]$, respectively, and their initial ``history of actions'' are denoted by $x^0 := e_{i_0}$ and $y^0 := e_{j_0}$, respectively (where $e_i$ denotes the $i$-th standard coordinate vector). 
At any time $t\ge 0$, in order for player I 
to choose the next action $i_{t+1}$, 
she first computes the best-logit-response distribution $w^t$ based on player II's history of actions $y^t$:
\begin{align}
w^t:= \rvP_\rvx(y^t):= {\argmin}_{x\in\Delta_n} \; \lranglet{A^\top y^t}{x} + \eta h_\rvx(x),\label{eq:def_wt}
\end{align}
where 
\begin{equation}
\hx(x):= \textstyle\sum_{i=1}^n x_i\ln(x_i) \qquad (\mbox{for }\,x\ge 0)
\end{equation}
is the (negative) entropic function defined on $\bbR^n_+$, 
$\Delta_n:= \{x\in\bbR_+^n: \sum_{i=1}^n  x_i=1\}$ denotes the $(n-1)$-dimensional probability simplex, and $\eta>0$ is the regularization parameter.  
Then, based on $w^t$, she randomly chooses $i_{t+1}$ by sampling from the distribution $w^t$, such that $\Pr(i_{t+1} = i) = w^t_{i}$ for $i\in[n]$. After obtaining $i_{t+1}$, she updates her history of actions from $x^t$ to $x^{t+1}$ by a convex combination of $x^t$ and $e_{i_{t+1}}$: 
\begin{equation}
x^{t+1} := (1-\alpha_t)x^t + \alpha_t e_{i_{t+1}},
\end{equation}
where $\alpha_t\in[0,1]$ can be interpreted as the ``step-size'' of player I, and is required to satisfy
\begin{equation}
\textstyle\sum_{t=0}^{+\infty} \,\alpha_t = +\infty\quad \mbox{and}\quad \textstyle\sum_{t=0}^{+\infty} \,\alpha_t^2 < +\infty.  \label{eq:alpha_t_cond}
\end{equation}
For player II, the update of her history of actions from $y^t$ to $y^{t+1}$ is symmetric to that of player I. Specifically, based on player I's history of actions $x^t$, she computes her best-logit-response distribution $s^t$ as 
\begin{equation}
s^t:= \rvP_\rvy(x^t):={\argmax}_{y\in\Delta_m} \; \lranglet{A x^t}{y} - \eta h_\rvy(y),  \label{eq:def_st}
\end{equation}
where 
\begin{equation}
\hy(y):= \textstyle\sum_{j=1}^m y_j\ln(y_j) \qquad (\mbox{for }\,y\ge 0)
\end{equation}
is the (negative) entropic function defined on $\bbR^m_+$. 
Then she samples her next action $j_{t+1}$ from $s^t$, and updates her history of actions from $y^t$ to $y^{t+1}$ as follows:
\begin{equation}
y^{t+1} := (1-\alpha_t)y^t + \alpha_t e_{j_{t+1}}.
\end{equation}

\begin{algorithm}[t!]
\caption{Logistic fictitious play (LFP)}\label{algo:LFP}
\begin{algorithmic}
\State {\bf Input}: Initial actions $i_0\in[n]$ and $j_0\in[m]$, and and step-sizes $\{\alpha_t\}_{t\ge 0}\subseteq (0,1]$. 
\State {\bf Initialize}: $x^0 := e_{i_0}$ and $y^0 := e_{j_0}$ 
\State {\bf At iteration $t\in\{0,1,\ldots\}$}:
\begin{align}
\label{eq:upd_x}
\begin{split}
x^{t+1} &:= (1-\alpha_t)x^t + \alpha_t e_{i_{t+1}}, \quad\mbox{where}\;\;  i_{t+1}\sim w^t \; \mbox{and}\;\; w^t \mbox{ is defined in }\eqref{eq:def_wt},
\end{split}\\
\label{eq:upd_y}
\begin{split}
y^{t+1} &:= (1-\alpha_t)y^t + \alpha_t e_{j_{t+1}},\quad\mbox{where}\;\;  j_{t+1}\sim s^t\;\; \mbox{and}\;\; s^t\mbox{ is defined in }\eqref{eq:def_st} . 
\end{split} 
\end{align}
\end{algorithmic}
\end{algorithm}

In the literature, the asymptotic convergence of LFP (i.e., Algorithm~\ref{algo:LFP}) has been well-studied. For example, Hofbauer and Sandholm~\cite{Hof_02} proves the following theorem:

\begin{theorem}[Hofbauer and Sandholm~{\cite[Theorem~6.1(ii)]{Hof_02}}]\label{thm:asymp_as}
Consider the fixed-point equation 
\begin{equation}
\rvP_\rvx(y) = x, \quad \rvP_\rvy(x) = y,\label{eq:fixed_pt}
\end{equation}
and its unique solution $(x^*,y^*)\in \ri\Delta_n\times\ri\Delta_m$. 
Then in Algorithm~\ref{algo:LFP}, for any initialization $i_0\in[n]$ and $j_0\in[m]$, and any step-sizes $\{\alpha_t\}_{t\ge 0}$ satisfying~\eqref{eq:alpha_t_cond},  we have 
\begin{equation}
\Pr\left({\lim}_{t\to+\infty}\, x_t = x^* \;\; \mbox{and}\;\; {\lim}_{t\to+\infty}\, y_t = y^*\right) = 1. \label{eq:as_conv}
\end{equation}
\end{theorem}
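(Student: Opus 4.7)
The plan is to view LFP as a Robbins--Monro stochastic approximation scheme whose mean-field ODE is the logit best-response dynamics
\begin{equation}
\dot{x} = \Px(y) - x, \qquad \dot{y} = \Py(x) - y,
\end{equation}
and then invoke a standard ODE-based convergence theorem. Setting $\xi^\rvx_{t+1} := e_{i_{t+1}} - \Px(y^t)$ and $\xi^\rvy_{t+1} := e_{j_{t+1}} - \Py(x^t)$, the updates recast as $x^{t+1} = x^t + \alpha_t(\Px(y^t)-x^t) + \alpha_t\xi^\rvx_{t+1}$, and analogously for $y^{t+1}$. Since $i_{t+1}$ and $j_{t+1}$ are sampled from $\Px(y^t)$ and $\Py(x^t)$ given the natural filtration, both noise sequences are uniformly bounded martingale differences. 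Because $\eta>0$, the softmax-type maps $\Px,\Py$ are globally Lipschitz on the simplices, and the iterates remain in the compact set $\Delta_n\times\Delta_m$.

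The crux is to establish that $(x^*,y^*)$ is a globally asymptotically stable equilibrium of this ODE, which I would do using as Lyapunov function the regularized duality gap $V(x,y) := G(x) - F(y)$, with $G(x) := \eta\hx(x) + \max_{y'\in\Delta_m}\{\lranglet{Ax}{y'} - \eta\hy(y')\}$ and $F(y) := \min_{x'\in\Delta_n}\{\lranglet{Ax'}{y} + \eta\hx(x')\} - \eta\hy(y)$. By strong duality for the strongly convex--strongly concave saddle problem $\min_x\max_y L(x,y)$ with $L(x,y) := \lranglet{Ax}{y} + \eta\hx(x) - \eta\hy(y)$, $V\ge 0$ with equality iff $(x,y)=(x^*,y^*)$; moreover, strong convexity/concavity yields $V(x,y)\ge \eta[D_{\hx}(x\|x^*) + D_{\hy}(y\|y^*)]$, where $D_h$ denotes Bregman divergence, so $V\to 0$ forces $(x,y)\to(x^*,y^*)$. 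Danskin's theorem gives $\nabla G(x) = A^\top\Py(x) + \eta\nabla\hx(x)$ and $\nabla F(y) = A\Px(y) - \eta\nabla\hy(y)$. Substituting into $\dot V = \lranglet{\nabla G(x)}{\dot x} - \lranglet{\nabla F(y)}{\dot y}$, the bilinear cross-terms $\lranglet{A^\top\Py(x)}{\Px(y)}$ and $\lranglet{A\Px(y)}{\Py(x)}$ cancel, and, after using the envelope identities $G(x)-\eta\hx(x) = \lranglet{Ax}{\Py(x)} - \eta\hy(\Py(x))$ and $F(y)+\eta\hy(y) = \lranglet{A\Px(y)}{y} + \eta\hx(\Px(y))$, the residual regularization terms collapse into Bregman divergences, yielding
\begin{equation}
\dot V = -V - \eta D_{\hx}(\Px(y)\|x) - \eta D_{\hy}(\Py(x)\|y) \le -V.
\end{equation}
Thus $V$ decays exponentially along the ODE, forcing $(x(t),y(t))\to(x^*,y^*)$.

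With $(x^*,y^*)$ globally asymptotically stable for the ODE, iterates bounded in the compact set $\Delta_n\times\Delta_m$, Lipschitz drift, martingale-difference noise with uniformly bounded second moments, and the Robbins--Monro step-sizes~\eqref{eq:alpha_t_cond}, a standard stochastic-approximation convergence theorem (e.g., Kushner--Clark, or Benaïm's ODE method) yields~\eqref{eq:as_conv}. The main obstacle will be the Lyapunov calculation: identifying the correct gap function $V$, performing the bilinear cancellation via Danskin's theorem, and recognizing the residual regularization terms as negative Bregman divergences so as to collapse them into the clean inequality $\dot V \le -V$. Once global asymptotic stability is in hand, the stochastic-approximation step is a direct invocation of a standard theorem.
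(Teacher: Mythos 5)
Your proposal is correct and follows essentially the same route the paper describes for this (cited) result: recast LFP as a stochastic approximation of the logit best-response ODE~\eqref{eq:det_dynamics}, establish global asymptotic stability of the unique rest point via the regularized duality gap as Lyapunov function (your identity $\dot V = -V - \eta D_{\hx}(\Px(y)\,\|\,x) - \eta D_{\hy}(\Py(x)\,\|\,y)$ checks out, and this is the same Lyapunov function the paper notes is standard in~\cite{Hof_05,Perkins_14}), and then invoke the asymptotic pseudo-trajectory/ODE method of Bena{\"\i}m to transfer convergence to the discrete iterates. The only detail you gloss over is that $\nabla\hx$ blows up on $\bdry\Delta_n$, but the flow enters $\ri\Delta_n\times\ri\Delta_m$ instantly, so $V$ is differentiable along trajectories for $s>0$ and the argument goes through.
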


\begin{algorithm}[t!]
\caption{Deterministic version of LFP (D-LFP)}\label{algo:det_LFP}
\begin{algorithmic}
\State {\bf Input}: Starting point $x^0 \in\Delta_n$ and $y^0 \in\Delta_m$, and step-sizes $\{\alpha_t\}_{t\ge 0}\subseteq (0,1]$. 
\State {\bf At iteration $t\in\{0,1,\ldots\}$}:
\begin{align}
x^{t+1}& := (1-\alpha_t)x^t + \alpha_t w^t,\quad\mbox{where}\;\;   w^t \mbox{ is defined in }\eqref{eq:def_wt},\label{eq:upd_x_exp}\\
y^{t+1}& := (1-\alpha_t)y^t + \alpha_t s^t, \;\quad\mbox{where}\;\;   s^t\mbox{ is defined in }\eqref{eq:def_st} . \label{eq:upd_y_exp}
\end{align}
\end{algorithmic}
\end{algorithm}

However, in contrast to the well-understanding of the asymptotic convergence of LFP, the  convergence rate of LFP is largely unknown. 
  The purpose of this section is to conduct a local convergence rate analysis of LFP, and show that with high probability, LFP converges locally at rate $O(1/t)$, where the convergence is measured in terms of certain expected duality gap. 
To that end, let us first consider D-LFP (i.e., the deterministic version of LFP), which is shown in Algorithm~\ref{algo:det_LFP}, and relate it to Algorithm~\ref{algo:GFWDA}. 

\subsection{Relating D-LFP to Algorithm~\ref{algo:GFWDA}} \label{sec:D_LFP}

Let us observe a simple but important fact,  that  is, D-LFP (i.e., Algorithm~\ref{algo:det_LFP}) is  an instance of Algorithm~\ref{algo:GFWDA} for solving the following instance of~\eqref{eq:P}:
\begin{equation}
\min_{x\in\bbR^n}\; \underbrace{\eta\ln\big(\textstyle\sum_{j=1}^m \exp(a_j^\top x/\eta)\big)}_{:= f(\rvA x)} + \underbrace{\eta\hx(x)+\iota_{\Delta_{n}}(x)}_{:=h(x)},  \tag{P-LFP}\label{eq:P_LFP}
\end{equation}
where $a_j^\top$ denotes the $j$-th row of $A$ (for $j\in[m]$),  
the linear operator $\rvA x:= Ax $ for $x\in\bbR^n$, and
\begin{equation}
f(u):= \eta\ln(\textstyle\sum_{j=1}^m \exp(u_j/\eta)), \quad \forall\,u\in\bbR^m.  \label{eq:log-sum-exp}
\end{equation}
As a result, the dual problem of~\eqref{eq:P_LFP} reads:
\begin{equation}
-\min_{y\in\bbR^m}\; \underbrace{\eta\hy(y)+\iota_{\Delta_{m}}(y)}_{:=f^*(y)} + \underbrace{\eta\ln\big(\textstyle\sum_{i=1}^n \exp(-A_i^\top y/\eta)\big)}_{:= h^*(-A^\top y)} , \tag{D-LFP} \label{eq:D_LFP}
\end{equation}
where $A_i$ denotes the $i$-th column of $A$ (for $i\in[n]$). 
Consequently, according to~\eqref{eq:pd_gap},
the duality gap $\Delta:\Delta_n\times\Delta_m\to\bbR$ has the following form: for any $(x,y)\in \Delta_n\times\Delta_m,$
\begin{equation}
\Delta(x,y) = \ln\big(\textstyle\sum_{j=1}^m \exp(a_j^\top x/\eta)\big) + \eta\hx(x) + \eta\ln\big(\textstyle\sum_{i=1}^n \exp(-A_i^\top y/\eta)\big) + \eta\hy(y). \label{eq:Delta_LFP}
\end{equation}

Now, in order to see that~\eqref{eq:P_LFP} is an instance of~\eqref{eq:P} (which in turn implies that~\eqref{eq:D_LFP} is an instance of~\eqref{eq:D}), it suffices to note that
\begin{enumerate}[label=\roman*)]
\item The function $f$ given in~\eqref{eq:log-sum-exp} is convex and differentiable on $\bbR^m$, and $\nabla f$ is $(1/\eta)$-Lipschitz on $\bbR^m$ with respect to $\normt{\cdot}_\infty$, i.e.,
\begin{equation}
\normt{\nabla f(u) - \nabla f(u')}_1\le (1/\eta)\normt{u-u'}_{\infty}, \quad \forall\,u,u'\in\bbR^m.  
\end{equation}
(To see this, note that for all $u\in\bbR^m$, $\normt{\nabla^2 f(u)}_{\infty,1}:= {\sup}_{\normt{z}_{\infty}=1}\; \normt{\nabla^2 f(u)z}_1\le 1/\eta.) $
\item The function $h := \eta\hx + \iota_{\Delta_n}$ is $\eta$-strongly convex on  $\dom h = \Delta_n$ with respect to $\normt{\cdot}_1$, i.e.,
\begin{equation}
h(\lambda x + (1-\lambda) y) \le \lambda h(x) + (1-\lambda) h(y) - (\eta/2)\lambda(1-\lambda)\normt{x-y}_1^2, \quad \forall\,x,y\in\Delta_n, \;\forall\lambda\in[0,1].\nn
\end{equation}
(For details, see e.g.,~\cite[Lemma~3]{Nest_05}.)
\end{enumerate}
In addition, to see that D-LFP (i.e., Algorithm~\ref{algo:det_LFP}) is an instance of Algorithm~\ref{algo:GFWDA}, we simply note that for all $t\ge 0$: i) $w^t = v^t= \nabla h^*(-\rvA ^* y^t)$ and ii) from the definition of $f$ in~\eqref{eq:log-sum-exp}, 
\begin{equation}
s_j^t = \frac{\exp(a_j^\top x^t/\eta)}{\sum_{l\in[m]}\exp(a_l^\top x^t/\eta)} = \nabla_j f(Ax^t) = g_j^t, \quad \forall\, j\in[m], 
\end{equation}
and hence $s^t = g^t= \nabla f(Ax^t)$. 
As a result, we see that  D-LFP also has the same linear convergence rate in~\eqref{eq:lin_rate} as Algorithm~\ref{algo:GFWDA}, if we choose the stepsizes $\{\alpha_t\}_{t\ge 0}$  in the same way as in Theorem~\ref{thm:lin_conv} with $\kappa := \normt{A}_{1,\infty}^2/\eta^2$, which is the condition number of~\eqref{eq:P_LFP}. (Recall that $\normt{A}_{1,\infty}$ denotes the $(1,\infty)$-operator norm of $A$, and is given by  $\normt{A}_{1,\infty}:= {\sup}_{\normt{z}_{1}=1}\; \normt{Az}_\infty = {\max}_{j\in[m],i\in[n]}\, \abst{a_{j,i}},$ where $a_{j,i}$ denotes the $(j,i)$-th entry of $A$, for $j\in[m]$ and $i\in[n]$.) 



\subsection{Local Convergence Rate Analysis of LFP}

Now, let us analyze the local convergence rate of LFP (i.e., Algorithm~\ref{algo:LFP}), by regarding it as certain ``stochastic approximation'' of D-LFP. 
Specifically, we can rewrite the iterations~\eqref{eq:upd_x} and~\eqref{eq:upd_y} in Algorithm~\ref{algo:LFP} as 
\begin{alignat}{2}
x^{t+1}&:= (1-\alpha_t)x^t + \alpha_t (w^t  + \zetax^t) = x^t + \alpha_t (w^t  + \zetax^t-x^t),&&\quad\mbox{where}\quad \zetax^t:= e_{i_{t+1}} - w^t, \label{eq:zetax_t}\\
 y^{t+1}&:= (1-\alpha_t)y^t + \alpha_t (s^t +  \zetay^t) = y^t + \alpha_t (s^t +  \zetay^t - y^t),&&\quad\mbox{where}\quad \zetay^t:= e_{j_{t+1}} - s^t.\label{eq:zetay_t}
\end{alignat}
Note that $\zetax^t$ and $\zetay^t$ can be regarded as the stochastic errors resulted from the sampling steps  $i_{t+1}\sim w^t$ and $j_{t+1}\sim s^t$, respectively. In fact, we can easily see that the sequences of errors  $\{\zetax^t\}_{t\ge 0}$ and $\{\zetay^t\}_{t\ge 0}$ are martingale difference sequences. 
 Formally, let us define a filtration $\{\calF_t\}_{t\ge 0}$ such that for all $t\ge 0$, $\calF_t:= \sigma\big(\{(x_i,y_i)\}_{i=0}^t\big)$, namely the $\sigma$-field generated by the set of random variables $\{(x_i,y_i)\}_{i=0}^t$. Then we have 
\begin{align}
\bbE_t[{\zetax^t}]: =\bbE[{\zetax^t}\,|\,\calF_t] = 0 \quad \mbox{and}\quad \bbE_t[{\zetay^t}]: =\bbE[{\zetay^t}\,|\,\calF_t] = 0, \quad \forall\,t\ge 0.\label{eq:unbiased}
\end{align}

Next, let us  note that  since $(x^*,y^*)\in \ri\Delta_n\times\ri\Delta_m$ (cf.\ Theorem~\ref{thm:asymp_as}), there exist radii $r_\rvx,r_\rvy>0$ such that $\calB_{r_\rvx}(x^*)\times \calB_{r_\rvy}(y^*)\subseteq \ri\Delta_n\times\ri\Delta_m$, where 
\begin{align*}
\calB_{r_\rvx}(x^*)&:= \{x\in\bbR^n:e^\top x=1,\;\normt{x-x^*}_1\le \rx\}, \\
\calB_{r_\rvy}(y^*)&:= \{y\in\bbR^m:e^\top y=1,\;\normt{y-y^*}_1\le \ry\}. 
\end{align*}
For notational convenience, let us write $\calN(x^*,y^*):= \calB_{r_\rvx}(x^*)\times \calB_{r_\rvy}(y^*)$, which is a compact neighborhood of $(x^*,y^*)$ that is {\em bounded away} from the relative boundary of $\Delta_n\times\Delta_m$. This neighborhood will play an important role in our local convergence rate analysis of LFP. The advantage of this  neighborhood can be seen from the following lemma.

\begin{lemma}\label{lem:Lx_Ly}
There exist {finite} constants $\Lx,\Ly\ge 0$ such that 
\begin{alignat}{2}
\hx(x')&\le \hx(x) + \lranglet{\nabla \hx(x)}{x'-x} + (\Lx/2)\normt{x'-x}_1^2,\quad &&\forall\,x,x'\in \calB_{r_\rvx}(x^*), \label{eq:smooth_hx}\\
\hy(y')&\le \hy(y) + \lranglet{\nabla \hy(y)}{y'-y} + (\Ly/2)\normt{y'-y}_1^2,\quad &&\forall\,y,y'\in \calB_{r_\rvy}(y^*). \label{eq:smooth_hy}
\end{alignat} 
\end{lemma}
\begin{proof}
Indeed, we can set 
$\Lx:= {\max}_{x\in \calB_{r_\rvx}(x^*)}\,\normt{\nabla^2 \hx(x)}_{1,\infty}, $ which is finite since $\nabla^2 \hx$ is continuous on the compact set $\calB_{r_\rvx}(x^*)$. Similarly, we can set 
$\Ly:= {\max}_{y\in \calB_{r_\rvy}(y^*)}\,\normt{\nabla^2 \hy(y)}_{1,\infty}<+\infty $. 
\end{proof}
In addition, let us make another important observation: with high probability, the sequence $\{(x^t,y^t)\}_{t\ge 0}$ produced by LFP (i.e., Algorithm~\ref{algo:LFP}) will eventually lie inside $\calN(x^*,y^*)$, for any initial actions $i_0\in[n]$ and $j_0\in[m]$, and any step-sizes $\{\alpha_t\}_{t\ge 0}$ satisfying the conditions in~\eqref{eq:alpha_t_cond}. In fact, this is a simple corollary of Theorem~\ref{thm:asymp_as}, which is stated as follows. 

\begin{corollary}\label{cor:whp}
Define the sequence of events $\{A_{T}\}_{T\ge 0}$ such that
\begin{equation}
\calA_T:= \big\{\forall\,t\ge T,\;\; (x^t,y^t)\in \calN(x^*,y^*)\big\}, \quad\forall\,T\ge 0.  \label{eq:event_A}
\end{equation}
If the step-sizes $\{\alpha_t\}_{t\ge 0}$ satisfy~\eqref{eq:alpha_t_cond}, then for any $\delta\in(0,1)$, there exists $T(\delta)<+\infty$ such that $\Pr\big(\calA_{T(\delta)}\big)\ge 1-\delta.$
\end{corollary}
\begin{proof}
Indeed, from standard results (e.g.,~\cite[Theorem~3.3]{Hunter_06}), we know that the almost sure convergence result in~\eqref{eq:as_conv} 
 is equivalent to $\lim_{T\to\infty}\Pr(A_T) = 1$. Therefore, for any $\delta\in(0,1)$, there exists $T(\delta)<+\infty$ such that for all $T\ge T(\delta)$, $\Pr(A_T)\ge 1-\delta$. This completes the proof. 
\end{proof}

Lastly, our analysis requires the following technical lemma, whose proof follows from standard techniques 
(see e.g.,~\cite[Section~3]{Freund_16}). For completeness, we provide its proof in Appendix~\ref{app:proof}. 

\begin{lemma} \label{lem:recur_V}
Let $\{V_t\}_{t\ge 0}$ be a nonnegative sequence that satisfies the following recursion:
\begin{equation}
V_{t+1}\le (1-\alpha_t)V_t + \alpha_t^2 C,\quad\forall\,t\ge t_0 \qquad \mbox{for some } t_0\ge 0, \label{eq:recur_V}
\end{equation}
where  $C\ge 0$ and $\alpha_t\in[0,1]$ for all $t\ge 0$. If we choose $\alpha_t = {2}/({t+2})$ for all $t\ge 0$, then we have 
\begin{alignat}{3}
V_t &\le \frac{t_0(t_0+1)}{t(t+1)} V_{t_0} + \frac{4C(t-t_0)}{t(t+1)}, \qquad &&\forall\,t\ge t_0+1.
\end{alignat}
\end{lemma}

Equipped with the results above, we are ready to analyze the local convergence rate of LFP.
Indeed, our analysis of LFP 
modifies the analysis of D-LFP (i.e., Algorithm~\ref{algo:GFWDA}), by properly handling the stochastic errors $\zetax^t$ and $\zetay^t$ that appear in~\eqref{eq:zetax_t} and~\eqref{eq:zetay_t}, respectively. Before presenting our results, 
 let us first observe that the duality gap $\Delta(\cdot,\cdot)$ in~\eqref{eq:Delta_LFP} is jointly continuous on  $\Delta_n\times\Delta_m$, and hence we can define its maximum on $\Delta_n\times\Delta_m$ as  
\begin{equation}
\Delta_{\max}:={\max}_{(x,y)\in \Delta_n\times\Delta_m} \; \Delta(x,y) <+\infty.  \label{eq:Delta_max}
\end{equation} 


\begin{theorem}[Local convergence rate of LFP]\label{thm:stoc}
In Algorithm~\ref{algo:LFP}, choose any initial actions $i_0\in[n]$ and $j_0\in[m]$, and $\alpha_t = {2}/({t+2})$ for all $t\ge 0$. 
Then  for any $\delta\in(0,1)$,  there exists $T(\delta)<+\infty$ such that $\Pr(\calA_{T(\delta)})\ge 1-\delta$ 
and
\begin{align}
\bbE\big[\Delta\big(x^t,y^{t}\big)\big|\calA_{T(\delta)}\big] &\le \frac{T(\delta)(T(\delta)+1)}{t(t+1)} \Delta_{\max} + \frac{8\eta( \Lx+\Ly+2\kappa)(t-T(\delta))}{t(t+1)}, \;\; \forall\,t\ge T(\delta)+1, \label{eq:stoc_2/(t+2)}
\end{align}
where recall that $\kappa=\normt{A}_{1,\infty}^2/\eta^2$ and $\Lx$ and $\Ly$ satisfy~\eqref{eq:smooth_hx} and~\eqref{eq:smooth_hy}, respectively.  
\end{theorem}

\begin{proof}
Since the step-sizes $\{\alpha_t\}_{t\ge 0}$ satisfy the conditions in~\eqref{eq:alpha_t_cond}, from Corollary~\ref{cor:whp}, we see that  there exists $T(\delta)<+\infty$ such that $\Pr(\calA_{T(\delta)})\ge 1-\delta$. Now, by conditioning on the event $\calA_{T(\delta)}$, we see that $(x^t,y^t)\in \calN(x^*,y^*)$ for all $t\ge T(\delta)$. Thus, using~\eqref{eq:smooth_hx}, 
we have for all $t\ge T(\delta)$,  
\begin{align*}
\hx(x^{t+1}) - \hx(x^t)&\le \lranglet{\nabla \hx(x^t)}{x^{t+1}-x^t} + ({\Lx}/{2})\normt{x^{t+1} - x^t}_1^2\\
&= \alpha_t\lranglet{\nabla \hx(x^t)}{e_{i_{t+1}} - x^t} + \alpha_t^2(\Lx/{2}) \normt{e_{i_{t+1}} - x^t}_1^2\\
&\le \alpha_t\lranglet{\nabla \hx(x^t)}{w^t - x^t} + \alpha_t\lranglet{\nabla \hx(x^t)}{\zetax^t} + 2\Lx\alpha_t^2 \nt\label{eq:ub_hx0}\\
&\le  \alpha_t(\hx(w^t) - \hx(x^t)) + \alpha_t\lranglet{\nabla \hx(x^t)}{\zetax^t} + 2\Lx\alpha_t^2,\nt\label{eq:ub_hx}
\end{align*}
where~\eqref{eq:ub_hx0} follows from the definition of $\zetax^t$ in~\eqref{eq:zetax_t} 
and $\normt{e_{i_{t+1}} - x^t}_1^2\le 2(\normt{e_{i_{t+1}}}_1^2 + \normt{x^t}_1^2) = 4$,  
and~\eqref{eq:ub_hx} follows from the convexity of $\hx$. 
Similarly, we have 
\begin{align*}
\hy(y^{t+1}) - \hy(y^t)&\le \lranglet{\nabla \hy(y^t)}{y^{t+1} - y^t} + (\Ly/2) \normt{y^{t+1} - y^t}_1^2\\
&\le \alpha_t\lranglet{\nabla \hy(y^t)}{s^t +  \zetay^t - y^t} + 2{\Ly}\alpha_t^2\\
&\le \alpha_t(\hy(s^t) - \hy(y^t)) + \alpha_t\lranglet{\nabla \hy(y^t)}{\zetay^t} + 2{\Ly}\alpha_t^2.\nt \label{eq:ub_hy}
\end{align*}
In addition, from~\eqref{eq:P_LFP} and~\eqref{eq:D_LFP}, we see that $f$ and $h^*$ are differentiable with  $\eta^{-1}$-Lipschitz gradients on $\bbR^n$ and $\bbR^m$, respectively, and hence 
\begin{align}
f(Ax^{t+1})-f(Ax^{t})&\le  \alpha_t\lranglet{\nabla f(Ax^{t})}{A(e_{i_{t+1}}-x^t)} + \alpha_t^2(\eta^{-1}/2) \normt{A}_{1,\infty}^2\normt{e_{i_{t+1}}-x^t}_1^2 \nn\\
&\le  \alpha_t(\lranglet{s^t}{A(w^t-x^t)}+\lranglet{s^t}{A\zetax^t})+ \alpha_t^2(2\normt{A}_{1,\infty}^2/\eta), \label{eq:smooth_gx_A_stoc}\\
h^*(-A^\top y^{t+1})-h^*(-A^\top y^{t}) &\le - \alpha_t\lranglet{\nabla h^*(-A^\top y^{t})}{A^\top (e_{j_{t+1}}-y^{t})} + \alpha_t^2(\eta^{-1}/2) \normt{A}_{1,\infty}^2\normt{e_{j_{t+1}}-y^t}_1^2,\nn\\
&\le  - \alpha_t(\lranglet{w^t}{A^\top (s^t-y^{t})} + \lranglet{w^t}{A^\top \zetay^t} )  + \alpha_t^2(2\normt{A}_{1,\infty}^2/\eta), \label{eq:smooth_gy_A_stoc}
\end{align}
where we use $s^t = \nabla f(Ax^{t})$ and $w^t = \nabla h^*(-A^\top y^t)$ (cf.\ Section~\ref{sec:D_LFP}). 
Therefore, by combining~\eqref{eq:ub_hx} to~\eqref{eq:smooth_gy_A_stoc}, and use the definitions $h := \eta\hx + \iota_{\Delta_n}$ and $f^*:= \eta\hy + \iota_{\Delta_m}$ in~\eqref{eq:P_LFP} and~\eqref{eq:D_LFP}, respectively,  we have 
\begin{equation}
\begin{split}
\Delta(x^{t+1},y^{t+1}) - \Delta(x^{t},y^{t}) &\le  \alpha_t\big\{h(w^t)+ \lranglet{A w^t}{ y^t} -h(x^t)  + f^*(s^t)-\lranglet{s^t}{A x^t} -f^*(y^t)\big\}\\
&+ \alpha_t\big\{\lranglet{\nabla h(x^t)+ A^\top s^t}{\zetax^t}  +\lranglet{\nabla f^*(y^t)-A w^t}{\zetay^t} \big\}\\
&+ 2\alpha_t^2\eta\left\{ \Lx+\Ly+2\normt{A}_{1,\infty}^2/\eta^2\right\}, \
\end{split}  \label{eq:Delta_recur_LFP}
\end{equation}
Since $s^t = \nabla f(Ax^{t})$ and $w^t = \nabla h^*(-A^\top y^t)$, we know that 
\begin{equation}
f^*(s^t)-\lranglet{s^t}{A x^t} = -f(Ax^t) \quad \mbox{and}\quad h(w^t)+ \lranglet{A w^t}{ y^t}= -h^*(-A^\top y^t).  \label{eq:Fenchel_equality}
\end{equation}
By combining~\eqref{eq:Delta_recur_LFP},~\eqref{eq:Fenchel_equality} and~\eqref{eq:unbiased}, we know that
\begin{align*}
\bbE_t[\Delta(x^{t+1},y^{t+1})]&\le (1-\alpha_t)\Delta(x^{t},y^{t}) + 2\alpha_t^2\eta( \Lx+\Ly+2\kappa), \quad  \forall\,t\ge T(\delta). \nt \label{eq:V_t+1_V_t_stoc}
\end{align*}
Finally, by applying Lemma~\ref{lem:recur_V} to~\eqref{eq:V_t+1_V_t_stoc} and using the definition of $\Delta_{\max}$ in~\eqref{eq:Delta_max}, we 
complete the proof. 
\end{proof}

\section{Preliminary Experimental Studies}\label{sec:experiments}

{\bf Experimental setup.} We compare the numerical performance of several previously mentioned methods on the~\eqref{eq:P_LFP} problem. These methods include 
\begin{enumerate}[label=\roman*)]
\item {\sf GFW-N}: The GFW method (i.e., Algorithm~\ref{algo:GFW}) with decreasing step-sizes  in Nesterov~\cite[Section~5]{Nest_18}. Specifically, $\alpha_t = \frac{6(t+1)}{(t+2)(2t+3)}$ for $t\ge 0$.
\item {\sf GFW-G}: The GFW method (i.e., Algorithm~\ref{algo:GFW}) with constant step-sizes  in Ghadimi~\cite[Corollary~1(b)]{Ghad_19}. Specifically,  $\alpha_t = 1/(1+4\kappa)$ for $t\ge 0$, where $\kappa=\normt{A}_{1,\infty}^2/\eta^2$. 
\item {\sf GFWDA}: Algorithm~\ref{algo:GFWDA} (or equivalently, Algorithm~\ref{algo:det_LFP}) with constant step-sizes as in Theorem~\ref{thm:lin_conv}.  Specifically,  $\alpha_t = \min\{1/(2\kappa),1\}$ for $t\ge 0$, where $\kappa=\normt{A}_{1,\infty}^2/\eta^2$.  
\item {\sf LFP}: Algorithm~\ref{algo:LFP} with decreasing step-sizes as in Theorem~\ref{thm:stoc}. Specifically,  $\alpha_t = 2/(t+2)$ for $t\ge 0$.
\end{enumerate}
To generate the data matrix $A$, we choose the dimensions $m = 100$ and $n=200$, and generate each entry of $A$ independently from the uniform distribution on the interval $[-8, 8]$. In addition, we choose $\eta = 10$. For the specific instance of $A$ used in our experiments, we have $\normt{A}_{1,\infty}\approx 8.0$ and hence $\kappa=\normt{A}_{1,\infty}^2/\eta^2 \approx 0.64.$ \\

\noindent {\bf Comparison criterion and starting points.} Note that each of the four methods above is able to generate certain sequence of duality gaps that converges to zero.  Specifically, the sequence generated by {\sf GFW-N} and {\sf GFW-G} is $\{\barG_t\}_{t\ge 0}$ (cf.~\eqref{eq:FW_h_nsc}) and  the sequence generated by {\sf GFWDA} and {\sf LFP} is $\{\Delta(x^t,y^t)\}_{t\ge 0}$ (cf.~\eqref{eq:pd_gap}). Due to this, we will use the convergence speed of these duality gaps as the comparison criterion. For starting points, we choose $x^0 = e_1$ for all the four methods. In addition, for {\sf GFWDA}, we choose $y^0 = \nabla f(\rvA x^0)$, so that {\sf GFW-N}, {\sf GFW-G} and {\sf GFWDA} have the same initial duality gap, namely $\barG_0 = G(x^0) = \Delta(x^0,\nabla f(\rvA x^0)) = \Delta(x^0,y^0)$. As for {\sf LFP}, since we need to choose $y^0 = e_{j_0}$ for some $j_0\in[m]$ (cf.\ Algorithm~\ref{algo:LFP}), we let $j_0 = \argmax_{j\in[m]} \nabla_j f(\rvA x^0)$. Note that this choice of $y^0$ will result in a larger duality gap compared to the one given by $y^0 = \nabla f(\rvA x^0)$. However, in our experiments, we observe that the difference is not significant. \\

\noindent {\bf Experimental results.}  We plot the duality gaps generated by all the four methods versus iterations  in Figure~\ref{fig:comparison}, in both log-linear and log-log scales. Since {\sf LFP} is a stochastic algorithm, we repeatedly run it for 10 times (with the same starting points as described above) and plot the averaged duality-gap trajectories. From Figure~\ref{fig:comparison}, we can make the following observations. First, {\sf GFWDA} and {\sf LFP} are the fastest and slowest among all the four methods, respectively. In fact, {\sf GFWDA} produces a duality gap of order $10^{-14}$ in less than 15 iterations,  while  {\sf LFP} hardly makes any progress during the first 30 iterations. Second, {\sf GFW-N} converges at a sub-linear rate that is much faster than 
$O(1/t^2)$, which is derived from theory (cf.~\eqref{eq:FW_Nest}). This is probably because~\eqref{eq:P_LFP} possesses certain structural properties (other than smoothness and strong convexity) that are favorable to {\sf GFW-N}. Third, although both {\sf GFWDA} and {\sf GFW-G}  converge linearly, the linear rate of {\sf GFW-G} is slower than that of  {\sf GFWDA}. This indeed agrees with our theoretical analysis. Specifically, 
the linear rate of {\sf GFW-G} is $1-1/(2(1+4\kappa))$ (cf.~\eqref{eq:FW_Ghad}), which is slower than the linear rate of {\sf GFWDA}, namely $1-{1}/(4\kappa)$  (cf.\ Theorem~\ref{thm:lin_conv}). 

 \begin{figure}[t!]\centering 
\subfloat[Log-linear scale]{\includegraphics[width=.45\linewidth]{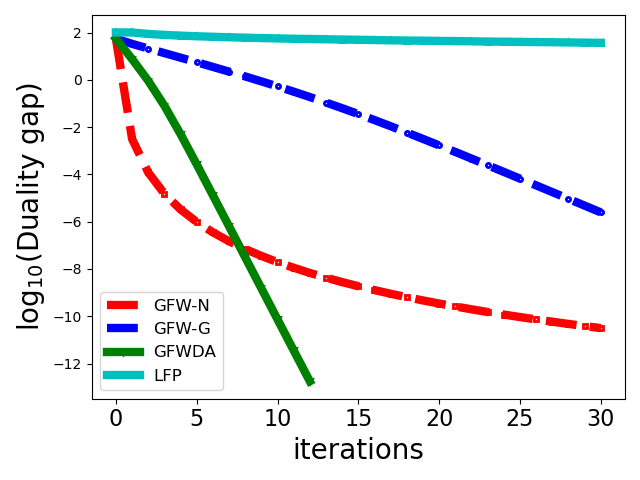}}\hspace{2ex}
\subfloat[Log-log scale]{\includegraphics[width=.45\linewidth]{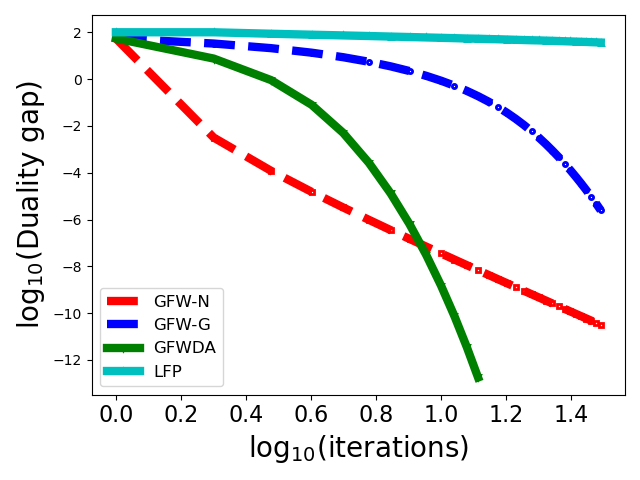}} 
\caption{Comparison of the convergence speed of duality gaps generated by {\sf GFW-N}, {\sf GFW-G}, {\sf GFWDA} and {\sf LFP} in (a) log-linear scale and (b) log-log scale.}\label{fig:comparison}
\end{figure}

\begin{figure}[t!]\centering 
\begin{minipage}[t!]{0.45\linewidth}
\includegraphics[width=\linewidth]{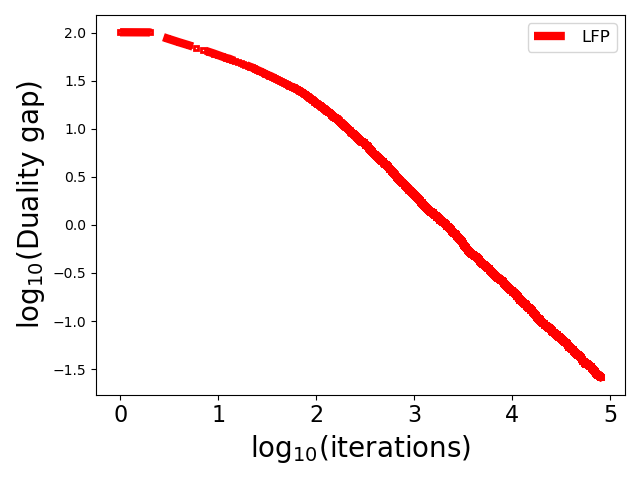}\centering
\vspace{-1em}
\captionof{figure}{Log-log plot of the averaged duality-gap trajectories generated by {\sf LFP}.}\label{fig:LFP}
\end{minipage}\hspace{4ex}
\begin{minipage}[t!]{0.5\linewidth}
\vspace{.5em}
\setlength\tabcolsep{2ex}
\def\arraystretch{1.3}
\begin{tabular}{|c|c|} 
\hline
Iteration intervals & Slopes 
\\\hline
$1$ to $5$ & -0.229\\
$5$ to $5^2$& -0.342\\
$5^2$ to $5^3$ & -0.590\\
$5^3$ to $5^4$ & -0.951\\
$5^4$ to $5^5$ & -1.030\\
$5^5$ to $5^6$ & -0.994\\
$5^6$ to $5^7$ & -0.991\\
\hline
\end{tabular}\centering
\captionof{table}{Slopes of the plot in Figure~\ref{fig:LFP} over iteration intervals of constant lengths in log-scale.} \label{tab:slopes}
\end{minipage}
\end{figure}

Next, let us examine the local convergence rate of {\sf LFP}. From the plot in Figure~\ref{fig:LFP}, we can observe an $O(1/t)$ convergence rate starting from around $100$ iterations. For better illustration, in Table~\ref{tab:slopes}, we compute the slopes of this plot over iteration intervals of constant lengths in log-scale. From Table~\ref{tab:slopes}, we can clearly see that the magnitudes of the slopes are initially very small, which correspond to the slow initial convergence of the duality gaps. However, they gradually converge to one, which correspond to the $O(1/t)$ local convergence rate as ``predicted'' in Theorem~\ref{thm:stoc}.  


\section*{Acknowledgment} 
The first author's research is supported by  AFOSR Grant No. FA9550-19-1-0240.

\appendix

\section{Proof of Lemma~\ref{lem:recur_V}}\label{app:proof}
Let $\{\beta_t\}_{t\ge 0}$ be a nonnegative auxiliary sequence such that
\begin{equation}
\beta_t\ge \beta_{t+1}(1-\alpha_{t+1}),\quad \forall\,t\ge t_0. \label{eq:beta_t}
\end{equation}
As a result, we have
\begin{align}
\beta_{t+1}(1-\alpha_{t+1})V_{t+1}\le \beta_{t}(1-\alpha_{t})V_{t} + \beta_t\alpha_t^2C, \quad \forall\,t\ge t_0. \label{eq:recur_beta}
\end{align}
After telescoping~\eqref{eq:recur_beta} over $i = t_0, \ldots, t-1$, 
we have 
\begin{align*}
&\beta_{t}(1-\alpha_{t})V_{t} \le \beta_{t_0}(1-\alpha_{t_0})V_{t_0} + C\textstyle\sum_{i=t_0}^{t-1} \beta_i\alpha_i^2,\qquad \forall\,t\ge t_0+1,\\
\end{align*}
and consequently,
\begin{align}
 V_t \le \frac{\beta_{t_0}(1-\alpha_{t_0})V_{t_0}}{\beta_{t}(1-\alpha_{t})} + \dfrac{C\sum_{i=t_0}^{t-1} \beta_i\alpha_i^2}{\beta_{t}(1-\alpha_{t})},\qquad \forall\,t\ge t_0+1, \label{eq:V_t0}
\end{align}
If we choose $\alpha_t = 2/(t+2)$ for all $t\ge 0$, to satisfy~\eqref{eq:beta_t}, we can then choose $\beta_t = (t+2)(t+1)/2$ for all $t\ge 0$. Therefore, we have 
\begin{equation}
\beta_{t}(1-\alpha_{t}) = t(t+1)/2 \quad\mbox{and}\quad \textstyle\sum_{i=t_0}^{t-1} \beta_i\alpha_i^2 = \sum_{i=t_0}^{t-1}  {2(i+1)}/{(i+2)}\le 2(t-t_0). \label{eq:V_t0_2}
\end{equation}
Substituting~\eqref{eq:V_t0_2} into~\eqref{eq:V_t0}, we then complete the proof.

\bibliographystyle{IEEEtr}
\bibliography{stat_ref,stoc_ref,math_opt,mach_learn}

\end{document}